\newtheorem{Theorem}{Theorem}[section]
\def\qed{\ifhmode\textqed\fi
	\ifmmode\ifinner\hfill\quad\qedsymbol\else\dispqed\fi\fi}
\def\textqed{\unskip\nobreak\penalty50
	\hskip2em\hbox{}\nobreak\hfill\qedsymbol
	\parfillskip=0pt \finalhyphendemerits=0}
\def\dispqed{\rlap{\qquad\qedsymbol}}
\def\p{\mathfrak{p}}
\def\q{\mathfrak{q}}
\def\m{\mathfrak{m}}
\def\ZZ{\mathbb{Z}}
\def\v{\textup{v}}
\def\Ass{\textup{Ass}}
\def\astab{\textup{astab}}
\def\vstab{\textup{vstab}}
\def\dstab{\textup{dstab}}
\def\rstab{\textup{rstab}}
\def\Min{\textup{Min}}
\def\reg{\textup{reg}}
\def\depth{\textup{depth}}
\begin{document}
	
	\title{Comparison of stability indices of powers of graded ideals}	
	\author{Antonino Ficarra, Emanuele Sgroi}
	
	\address{Antonino Ficarra, BCAM -- Basque Center for Applied Mathematics, Mazarredo 14, 48009 Bilbao, Basque Country -- Spain}
	\address{Ikerbasque, Basque Foundation for Science, Plaza Euskadi 5, 48009 Bilbao, Basque Country -- Spain}
	\email{aficarra@bcamath.org,\,\,\,\,\,\,\,\,\,\,\,\,\,antficarra@unime.it}
	
	\address{Emanuele Sgroi, Department of mathematics and computer sciences, physics and earth sciences, University of Messina, Viale Ferdinando Stagno d'Alcontres 31, 98166 Messina, Italy}
	\email{emasgroi@unime.it}
	
	\thanks{
	}
	
	\subjclass[2020]{Primary 13F20; Secondary 13F55, 05C70, 05E40.}
	
	\keywords{$\textup{v}$-number, primary decomposition, associated primes, monomial ideals}
	
	\begin{abstract}
		In this paper, we compare the index of ass-stability $\text{astab}(I)$ and the index of $\text{v}$-stability $\text{vstab}(I)$ of powers of a graded ideal $I$. We prove that $\text{astab}(I)=1\le\text{vstab}(I)$ for any graded ideal $I$ in a 2-dimensional polynomial ring, and that $\text{vstab}(I)$ can be any positive integer in this situation. Moreover, given any integers $a,b\ge1$, we construct a graded ideal $I$ in a $3(a+1)$-dimensional polynomial ring such that $(\text{astab}(I),\text{vstab}(I))=(a,b)$.
	\end{abstract}\bigskip
	
	\maketitle
	
	\section*{Introduction}
	Let $S=K[x_1,\dots,x_n]$ be a standard graded polynomial ring over a field $K$, let $I\subset S$ be a graded ideal, and let $\Ass(I)$ be the set of associated primes of $I$. By a famous result of Brodmann \cite{B79} the sequence $\{\Ass(I^k)\}_{k>0}$ stabilizes. That is $\Ass(I^{k+1})=\Ass(I^k)$ for all $k\gg0$. We denote this common sets of associated primes by $\Ass^\infty(I)$. The least integer $k_0$ for which $\Ass(I^k)=\Ass(I^{k_0})$ for all $k\ge k_0$ is called the \textit{index of ass-stability} of $I$ and is denoted by $\astab(I)$.
	
	Let $\p\in\Ass(I)$. The $\v_\p$-number of $I$ is the least degree $\v_\p(I)$ of a homogeneous polynomial $f\in S$ for which $I:f=\p$, \cite{CSTVV20}. Whereas, the $\v$-number of $I$ is defined as
	$$
	\v(I)=\min_{\p\in\Ass(I)}\v_\p(I).
	$$
	
	It was proved by Conca \cite{Conca23}, and independently by the authors of this paper \cite{FS2}, that for each $\p\in\Ass^\infty(I)$, the function $\v_\p(I^k)$ is eventually linear of the form $a_\p k+b_\p$, and also that $\v(I^k)$ is eventually linear of the form $ak+b$, where $b\in\ZZ$ and $a=\alpha(I)=\min\{d:\ I_d\ne0\}$ is the \textit{initial degree} of $I$, see \cite[Theorem 4.1]{FS2}.
	
	Let $\p\in\Ass^\infty(I)$. The least integer $k_0$ such that $\p\in\Ass(I^k)$ and $\v_\p(I^k)=a_\p k+b_\p$ for all $k\ge k_0$, is called the \textit{index of $\v_\p$-stability} of $I$ and is denoted by $\vstab_\p(I)$. The \textit{$\v$-stability index} of $I$, introduced in \cite{BMS24}, is defined as the least integer $\vstab(I)$ for which $\v(I^k)=\alpha(I)k+b$ for all $k\ge\vstab(I)$.
	
	The goal of this paper is to compare the stability indices $\astab(I)$ and $\vstab(I)$ of a graded ideal $I\subset S$. We are partly inspired by the paper \cite{HM} of Herzog and Mafi, where $\astab(I)$ is compared with the index of depth stability of $I$.
	
	For $\p\in\Ass^\infty(I)$, let $\astab_\p(I)=\min\{k_0:\ \p\in\Ass(I^k)\ \textup{for all}\ k\ge k_0\}$. By definition, we have $\vstab_\p(I)\ge\astab_\p(I)$ and $\max_{\p\in\Ass^\infty(I)}\astab_\p(I)=\astab(I)$. Hence
	$$
	\max_{\p\in\Ass^\infty(I)}\vstab_\p(I)\ \ge\ \astab(I).
	$$
	
	Since $\v(I^k)=\min_{\p\in\Ass^\infty(I)}\v_\p(I^k)$ for all $k\ge\astab(I)$, and the minimum of finitely many linear functions is also a linear function, it follows that
	$$
	\vstab(I)\ \le\ \max_{\p\in\Ass^\infty(I)}\vstab_\p(I).
	$$
	
	In view of the above inequalities, one may expect that $\vstab(I)\ge\astab(I)$. In Section \ref{sec2}, we prove that $\text{astab}(I)=1\le\text{vstab}(I)$ for any graded ideal $I\subset S$, provided that $\dim(S)\le2$. In this situation, we show that $\vstab(I)$ can be any given integer. On the other hand, in Section \ref{sec3}, given any integers $a,b\ge1$, we construct a graded ideal $I$ in a $3(a+1)$-dimensional polynomial ring such that $(\text{astab}(I),\text{vstab}(I))=(a,b)$. Hence, in general, the indices $\astab(I)$ and $\vstab(I)$ are not comparable.
	
	\section{The case $\dim(S)\le2$}\label{sec2}
	
	In this section we prove that $\astab(I)\le\vstab(I)$ provided that $\dim(S)\le2$.
	\begin{Theorem}\label{Thm:dim2}
		Let $\dim(S)\le2$. Then $\astab(I)=1$ for any graded ideal $I\subset S$. Moreover, let $S=K[x,y]$ and $I=(x^{2b+1},x^2y^{2b-1},y^{2b+1})$ with $b\ge1$. Then
		\begin{enumerate}
			\item[\textup{(a)}] $\v(I^k)=(2b+1)k$, for $1\le k\le b-1$,
			\item[\textup{(b)}] $\v(I^k)=(2b+1)k-1$, for $k\ge b$,
			\item[\textup{(c)}] $(\astab(I),\vstab(I))=(1,b)$. 
		\end{enumerate}
	\end{Theorem}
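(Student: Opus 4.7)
For the general claim $\astab(I) = 1$ when $\dim S \le 2$, the cases $\dim S \in \{0,1\}$ are immediate, so assume $S = K[x,y]$. My plan is to exploit that in this UFD every nonzero graded ideal $I$ factors uniquely as $I = dJ$, where $d = \gcd(I)$ is a graded form and $J$ is either all of $S$ or a proper $\m$-primary ideal. A short argument shows that $\m \in \Ass(I)$ exactly when $J \ne S$: if $h \notin J$ satisfies $h\m \subseteq J$ (which exists since $J$ is $\m$-primary), then $dh \notin I$ but $dh\,\m \subseteq dJ = I$. Since $I^k = d^k J^k$ with $J^k$ still a proper $\m$-primary ideal whenever $J$ is, the same characterization gives $\m \in \Ass(I^k) \Longleftrightarrow \m \in \Ass(I)$; combined with $\Min(I^k) = \Min(I)$, this yields $\Ass(I^k) = \Ass(I)$ for every $k \ge 1$.

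For the concrete ideal $I = (x^{2b+1}, x^2 y^{2b-1}, y^{2b+1})$, note first that $x^{2b+1}, y^{2b+1} \in I$ forces $\sqrt{I} = \m$, so $\Ass(I^k) = \{\m\}$ and $\v(I^k) = \v_\m(I^k)$ for all $k$. Writing $u,v,w$ for the three generators, a direct expansion of $u^i v^j w^l$ with $i+j+l = k$ shows that every generator of $I^k$ has total degree $(2b+1)k$, and in fact $u^i v^j w^l = x^{A_{i,j}} y^{(2b+1)k - A_{i,j}}$ with $A_{i,j} := (2b+1)i + 2j$. Hence the $x$-exponents of the minimal monomial generators of $I^k$ form the set
$$
S_k \;=\; \{(2b+1)i + 2j \,:\, i, j \ge 0,\ i+j \le k\}.
$$
For a monomial $\m$-primary ideal in $K[x,y]$, $\v_\m(I^k)$ equals the least $a + c$ with $x^a y^c \notin I^k$ and $x^{a+1}y^c,\, x^a y^{c+1} \in I^k$. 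Because $I^k$ is equigenerated, such corners $(a,c)$ are in bijection with pairs of consecutive elements $A_l < A_r$ of $S_k$ via $(a,c) = (A_r - 1,\,(2b+1)k - 1 - A_l)$, and this corner has degree $(2b+1)k - 2 + (A_r - A_l)$. Thus $\v(I^k) = (2b+1)k - 2 + \delta_k$, where $\delta_k$ denotes the minimum gap between consecutive elements of $S_k$.

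It remains to compute $\delta_k$ by a parity analysis. The $A_{i,j}$-values from $i=0$ are the evens $\{0, 2, \ldots, 2k\}$; those from $i = 1$ are the odds $\{2b+1, 2b+3, \ldots, 2b + 2k - 1\}$; and those from $i \ge 2$ start only at $\ge 4b + 2$ (so cannot lower $\delta_k$). For $1 \le k \le b-1$, the $i=0$ and $i=1$ blocks are separated by a gap $2b+1-2k \ge 3$ while each block has internal gap $2$, so $\delta_k = 2$ and $\v(I^k) = (2b+1)k$, proving (a). For $k \ge b$, both $2k$ and $2k+1 = (2b+1) + 2(k - b)$ lie in $S_k$, so $\delta_k = 1$ and $\v(I^k) = (2b+1)k - 1$, proving (b). Part (c) is then immediate: $\astab(I) = 1$ follows from the first paragraph, and since the eventual linear form $(2b+1)k - 1$ is first attained at $k = b$ but fails at $k = b - 1$, we get $\vstab(I) = b$. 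The only non-routine step is establishing the bijection between corners and consecutive gaps of $S_k$; once that is in place, everything reduces to the parity check above.
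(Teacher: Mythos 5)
Your argument is correct, and in two places it takes a genuinely different route from the paper. For the general claim $\astab(I)=1$ in dimension $\le 2$, the paper simply cites Herzog--Mafi \cite[Remark 1.1]{HM}, whereas you give a self-contained proof via the factorization $I=dJ$ with $d=\gcd(I)$ and $J$ either $S$ or $\m$-primary, observing that $\m\in\Ass(I^k)$ iff $J\ne S$ (socle element of $J^k$ times $d^k$) and that the remaining associated primes are the minimal ones, which are power-independent; this is a nice elementary replacement for the citation. For the ideal $I=(x^{2b+1},x^2y^{2b-1},y^{2b+1})$, both you and the paper reduce to the corner formula $\v_\m=\min_i\{a_i+b_{i+1}-2\}$ of \cite[Proposition 5.5]{FS2} (your ``corners are in bijection with consecutive gaps'' step is exactly this formula for an equigenerated ideal, so you may as well cite it rather than re-prove it). The difference is downstream: you encode everything in the single exponent set $S_k=\{(2b+1)i+2j:\ i+j\le k\}$ and its minimum gap $\delta_k$, getting $\v(I^k)=(2b+1)k-2+\delta_k$, which handles (a) and (b) uniformly and makes the lower bound $\v(I^k)\ge(2b+1)k-1$ for $k\ge b$ automatic ($\delta_k\ge1$); the paper instead orders the generators explicitly for $k<b$ and, for $k\ge b$, exhibits the witness $w=x^{2b}y^{(2b+1)(k-1)}$ together with the external bound $\v(I^k)\ge\alpha(I)k-1$ from \cite[Proposition 2.2]{F2023}. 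Your route is arguably cleaner and more self-contained. One spot is too terse: in case $1\le k\le b-1$ you dismiss the values with $i\ge2$ merely because they start at $4b+2$, but this by itself does not exclude a gap of $1$ \emph{among} those values (e.g.\ between the $i=2$ and $i=3$ blocks). The clean fix is the parity computation: $(2b+1)(i-i')=1-2(j-j')$ forces either $k\ge b$ (if $i>i'$) or $j\ge b+1>k$ (if $i<i'$), so no two elements of $S_k$ differ by $1$ when $k\le b-1$. With that one line added, the proof is complete.
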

	\begin{proof}
		Let $\dim(S)\le2$. By \cite[Remark 1.1]{HM} we have $\astab(I)=1$ for any graded ideal $I\subset S$. Now, let $S=K[x,y]$ and $I=(x^{2b+1},x^2y^{2b-1},y^{2b+1})$. Notice that the statement (c) follows from (a) and (b). So, we prove (a) and (b).\smallskip
		
		(a) Let $\m=(x,y)$, and notice that $I^k$ is a $\m$-primary ideal for all $k\ge1$. Hence $\v(I^k)=\v_\m(I^k)$ for all $k\ge1$. Any $\m$-primary monomial ideal $J\subset S$ is of the form $J=(x^{a_1},x^{a_2}y^{b_2},\dots,x^{a_{m-1}}y^{b_{m-1}},y^{b_m})$ for integer sequences ${\bf a}:a_1>\dots>a_{m-1}>0$ and ${\bf b}:0<b_2<\dots<b_m$, with $m\ge2$. It is shown in \cite[Proposition 5.5]{FS2} that
		\begin{equation}\label{eq:vmI}
			\v_\m(J)\ =\ \min_{1\le i\le m-1}\{a_i+b_{i+1}-2\}.
		\end{equation}
		
		In order to apply this formula in our situation, we first prove that
		\begin{equation}\label{I^k-ij}
			I^k\ =\ (u_{i,j}:\ 0\le i\le k,\,0\le j\le i),
		\end{equation}
		with
		$$
		u_{i,j}\ =\ x^{(2b+1)(k-i)+2(i-j)}y^{(2b+1)i-2(i-j)}.
		$$
		
		To this end, notice that
		\begin{equation}\label{eq:I^k-pqr}
			\begin{aligned}
				I^k\ &=\ ((x^{2b+1})^{p}(x^2y^{2b-1})^{q}(y^{2b+1})^{r}:\ p+q+r=k)\\
				&=\ (x^{(2b+1)p+2q}y^{(2b-1)q+(2b+1)r}:\ p+q+r=k).
			\end{aligned}
		\end{equation}
		
		Write $p=k-i$ for $0\le i\le k$, and set $r=j$. Then $q+j=i$ and so $0\le j\le i$. With these positions, we have $(2b+1)p+2q=(2b+1)(k-i)+2(i-j)$ and
		$$
		(2b-1)q+(2b+1)r=(2b-1)(i-j)+(2b+1)j=(2b+1)i-2(i-j).
		$$
		
		These computations together with formula (\ref{eq:I^k-pqr}) show that equation (\ref{I^k-ij}) holds.
		
		For a monomial $u=x^{a}y^{b}\in S$, let $\deg_x u=a$ and $\deg_y u=b$. Now, let $1\le k<b$. We claim that
		$$
		\deg_x u_{0,0}>\deg_x u_{1,0}>\deg_x u_{1,1}>\deg_x u_{2,0}>\dots>\deg_x u_{k,k-1}>\deg_x u_{k,k}=0
		$$
		and
		$$
		0=\deg_y u_{0,0}<\deg_y u_{1,0}<\deg_y u_{1,1}<\deg_y u_{2,0}<\dots<\deg_y u_{k,k-1}<\deg_y u_{k,k}.
		$$
		
		Since $\deg_x u_{i,j}+\deg_{y}u_{i,j}=(2b+1)k$ for all $i,j$, it is enough to prove the first set of inequalities. It is clear that $\deg_x u_{i,j}>\deg_x u_{i,j+1}$ for $0\le j\le i-1$. It remains to prove that $\deg_x u_{i,i}>\deg_x u_{i+1,0}$, for $0\le i\le k-1$. Suppose for a contradiction that $\deg_x u_{i,i}\le\deg_x u_{i+1,0}$. Then
		$$
		(2b+1)(k-i)\ \le\ (2b+1)(k-(i+1))+2(i+1).
		$$
		
		Since $i+1\le k+1\le b$, we obtain $2b+1\le2(i+1)\le2b$, which is impossible. This proves our claim. Hence, for $1\le k<b$, by applying formula (\ref{eq:vmI}) we obtain
		\begin{align*}
			\v(I^k)\ =&\ \min_{\substack{0\le i\le k-1\\ 0\le j\le i+1}}\{\deg_x u_{i+1,j}+\deg_y u_{i+1,j+1}-2,\,\deg_{x}u_{i,i}+\deg_yu_{i+1,0}-2\}\\
			=&\ \min_{0\le i\le k-1}\{(2b+1)k,\,(2b+1)(k+1)-2(i+1)-2\}\\[8pt]
			=&\ (2b+1)k,
		\end{align*}
		as desired.\smallskip
		
		(b) Let $w=x^{2b}y^{(2b+1)(k-1)}$ with $k\ge b$. We have $\deg(w)=\alpha(I^k)-1$ and so $w\notin I^k$. Moreover $xw=(x^{2b+1})y^{(2b+1)(k-1)}\in I^k$ and $yw=(x^2y^{2b-1})^by^{(2b+1)(k-b)}\in I^k$. Hence $I^k:w=\m$ and so $\v(I^k)\le\alpha(I)k-1$ for all $k\ge b$. Since by~\cite[Proposition 2.2]{F2023} we have $\v(I^k)\ge\alpha(I)k-1$ for all $k\ge1$, the assertion follows.
	\end{proof}
	
	\section{The case $\dim(S)\ge3$}\label{sec3}
	
	We now come to the main result of this paper.
	\begin{Theorem}
		Let $a,b\ge1$ be positive integers. Then, there exist a number $N>0$ and a monomial ideal $I\subset S=K[x_1,\dots,x_N]$ such that
		$$
		(\astab(I),\vstab(I))=(a,b).
		$$
	\end{Theorem}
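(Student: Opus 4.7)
The overall plan is to build $I$ as a monomial ideal spread across $a+1$ disjoint blocks of three variables each, with one distinguished block carrying all the $\v$-stability information (pushing $\vstab(I)$ up to $b$) and each of the remaining $a$ blocks contributing exactly one new associated prime, appearing at a specific power $k=i$, so that $\astab(I)=a$. Concretely, I would label the variables as $\{x_i,y_i,z_i\}_{i=0}^{a}$, place on the block $\{x_0,y_0,z_0\}$ a three-variable analog $J_0$ of the ideal $(x^{2b+1},x^2y^{2b-1},y^{2b+1})$ of Theorem~\ref{Thm:dim2} (modified to fit the richer ambient ring), and place on each $\{x_i,y_i,z_i\}$ for $i\ge 1$ a small monomial ideal $J_i$ whose sole role is to introduce a single new embedded prime $\p_i$ into $\Ass(I^k)$ precisely at $k=i$.

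The computational engine is the fact that for monomial ideals $J\subset K[\mathbf{x}]$ and $L\subset K[\mathbf{y}]$ in disjoint variable sets, $S/(J+L)$ factors as a tensor product, so $\Ass(S/(J+L))=\{P+Q:P\in\Ass(K[\mathbf{x}]/J),\ Q\in\Ass(K[\mathbf{y}]/L)\}$, and analogous (but more intricate) formulas, applied block by block, govern $\Ass(I^k)$ together with the individual numbers $\v_\p(I^k)$. Using these, $\astab(I)$ reduces to the maximum over the $a+1$ blocks of the first power at which each new prime appears, equal to $a$ by design, while $\v(I^k)=\min_{\p}\v_\p(I^k)$ is eventually realised on a prime coming from the $J_0$-block, so that $\vstab(I)=b$ as desired.

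The main obstacle will be to arrange the auxiliary ideals $J_1,\dots,J_a$ so that: (i) the new prime $\p_i$ enters $\Ass(I^k)$ at exactly step $i$, which requires a delicate control of the primary decompositions of powers of sums of monomial ideals; and (ii) in the difficult regime $b<a$, the values $\v_{\p_i}(I^k)$ for the later-appearing primes remain strictly above the linear function $\alpha(I)k-1$ that $\v(I^k)$ already attains from step $b$ onwards. This second point is the crux, since it is exactly what permits $\vstab(I)<\astab(I)$, a phenomenon absent in the $\dim S\le2$ case, and hence shows the two indices to be genuinely incomparable in general. Once these bounds are established, the conclusion follows by combining Theorem~\ref{Thm:dim2}(b) on the distinguished block with the disjoint-variable machinery outlined above.
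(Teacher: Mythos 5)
Your overall architecture (disjoint variable blocks, one block governing the $\v$-function and the others governing the associated primes) matches the paper's in spirit, but the mechanism you propose for forcing $\astab(I)=a$ does not work as described, and this is a genuine gap. You want block $i$ to contribute a new embedded prime $\p_i$ first appearing at power $k=i$, and you conclude $\astab(I)=a$ by taking the maximum over blocks. However, the ``more intricate formula'' governing $\Ass$ of powers of a sum of ideals in disjoint variables is precisely Nguyen--Tran's
$$
\Ass((J+L)^k)\ =\ \{\p+\q\ :\ 0\le\ell<k,\ \p\in\Ass(J^{k-\ell}),\ \q\in\Ass(L^{\ell+1})\},
$$
and under it the appearance indices of \emph{combined} primes add rather than take a maximum: if $\p$ first appears (and persists) in $\Ass(J^{i})$ and $\q$ in $\Ass(L^{j})$, then $\p+\q$ first appears in $\Ass((J+L)^{i+j-1})$. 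With your design the prime $\p_a+\p_{a-1}$ would enter only at power $2a-2$, which exceeds $a$ for $a\ge3$ (and sums over more blocks enter even later), so $\astab(I)$ would be far larger than $a$. The paper sidesteps exactly this by using $a-1$ \emph{identical} triangle edge ideals $J_i$, each with $\astab(J_i)=2$, so that the iterated sum formula yields $\astab(J_1+\dots+J_{a-1})=a$ on the nose. Separately, your assertion that a three-variable block can be rigged so that a single new embedded prime enters at exactly power $i$ for arbitrary $i$ is left without a construction; the paper never needs such blocks.

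The second unresolved point, which you yourself flag as ``the crux,'' is how to guarantee that the minimum defining $\v(I^k)$ is realised on the distinguished block and breaks exactly at $k=b$. This requires aligning initial degrees: the paper multiplies $J$ by $x_0^{2b-1}$, so that $L=x_0^{2b-1}J$ satisfies $\v(L^k)=(2b+1)k-1=\alpha(H)k-1$ for all $k\ge1$, and only then does the $\v$-function formula for sums (\cite[Corollary 4.2]{FM}) give $\v(I^k)=(2b+1)k+2b$ for $k<b$ and $(2b+1)k+2b-1$ for $k\ge b$, hence $\vstab(I)=b$. If you simply add a degree-$(2b+1)$ block $J_0$ to low-degree blocks with $\alpha=2$, the eventual slope of $\v(I^k)$ is $\alpha(I)=2$ and the minimum over $\ell$ in the sum formula is typically achieved away from the $J_0$-block, so the break point is inherited from the wrong summand (generically giving $\vstab(I)=1$). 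As written, both coordinates of the target pair $(a,b)$ are therefore unestablished; the two missing ingredients are the additivity of appearance indices under the Nguyen--Tran formula and the degree normalization of the non-distinguished part.
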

	\begin{proof}
		If $a=1$, the statement follows from Theorem \ref{Thm:dim2}.
		
		Now, let $a\ge2$. Let $J_i=(x_{3i-2}x_{3i-1},x_{3i-2}x_{3i},x_{3i-1}x_{3i})$ for $1\le i\le a-1$ and set $J=J_1+\dots+J_{a-1}$. Notice that each $J_i$ is an edge ideal with linear resolution. Thus, \cite[Theorem 5.1]{F2023} implies that $\v(J_i^k)=2k-1$ for all $k\ge1$, and in particular $\vstab(J_i)=1$ for $1\le i\le a-1$. Since $\alpha(J_i)=2$ and the generators of the ideals $J_i$ are monomials in pairwise disjoint sets of variables, \cite[Theorem 5.2]{FM} implies that
		\begin{equation}\label{eq:Jv}
			\begin{aligned}
				\v(J^k)\ &=\ (\min_{1\le i\le a-1}\alpha(J_i))k+(\sum_{i=1}^{a-1}\alpha(J_i)-\min_{1\le i\le a-1}\alpha(J_i)-(a-1))\\
				&=\ 2k+a-3,
			\end{aligned}
		\end{equation}
		for all $k\ge1$. In particular $\vstab(J)=1$.
		
		It is immediate to see that $$\Ass(J_i)=\Min(J_i)=\{(x_{3i-2},x_{3i-1}),(x_{3i-2},x_{3i}),(x_{3i-1},x_{3i})\}.$$
		
		Since $J_i^2:(x_{3i-2}x_{3i-1}x_{3i})=(x_{3i-2},x_{3i-1},x_{3i})=\m_i$ and edge ideals satisfy the persistence property (see \cite[Theorem 2.15]{MMV}) we have $\Ass(J_i^k)=\Ass(J_i)\cup\{\m_i\}$ for all $k\ge2$. Hence $\astab(J_i)=2$ for $1\le i\le a-1$. If $a=2$, then $J=J_1$ and $\astab(J)=a=2$, as wanted. Suppose now $a>2$. By \cite[Theorem 4.1(3)]{NT} we have
		$$
		\Ass((J_1+J_2)^k)\ =\ \{\p+\q\ :\ 0\le\ell<k,\, \p\in\Ass(J_1^{k-\ell}),\, \q\in\Ass(J_2^{\ell+1})\}.
		$$
		
		From this formula we see immediately that $\Ass(J_1+J_2)\subseteq\Ass((J_1+J_2)^2)\subseteq\Ass((J_1+J_2)^3)\subseteq\cdots$ and $\astab(J_1+J_2)=3$. Applying again this result to $J_1+J_2$ and $J_3$ and proceeding iteratively in this way we obtain that $\Ass(J)\subseteq\Ass(J^2)\subseteq\Ass(J^3)\subseteq\cdots$ and $\astab(J)=a$.
		
		Now, we consider the ideal
		$$
		I\ =\ x_0^{2b-1}J+(x^{2b+1},x^2y^{2b-1},y^{2b+1})
		$$
		in the polynomial ring $S=K[x_0,x_1,\dots,x_{3a},x,y]$ in $3(a+1)$ variables.
		
		We claim that $(\astab(I),\vstab(I))=(a,b)$. To this end, we set $L=x_0^{2b-1}J$ and $H=(x^{2b+1},x^2y^{2b-1},y^{2b+1})$. Using \cite[Corollary 2.3]{FM} and formula (\ref{eq:Jv}) we have
		\begin{equation}\label{eq:vL}
			\begin{aligned}
				\v(L^k)\ &=\ \min\{\v((x_0^{(2b-1)k}))+\alpha(J)k,\,\alpha((x_0)^{2b-1})k+\v(J^k)\}\\
				&=\ \min\{(2b-1)k-1+2k,(2b-1)k+2k+a-3\}\\
				&=\ (2b+1)k-1,
			\end{aligned}
		\end{equation}
		for all $k\ge1$, where we used that $a\ge2$.
		
		By \cite[Lemma 2.1]{FM} we have $\Ass(L^k)=\Ass(J^k)\cup\{(x_0)\}$ for all $k\ge1$. Hence $\Ass(L)\subseteq\Ass(L^2)\subseteq\Ass(L^3)\subseteq\cdots$. Using this fact and since $\Ass(H^k)=\{(x,y)\}$ for all $k\ge1$, it follows from \cite[Theorem 4.1(3)]{NT} that
		\begin{equation}\label{eq:assI}
			\Ass(I^k)\ =\ \bigcup_{\substack{0\le\ell<k\\ \p\in\Ass(L^{k-\ell})}}\{(\p,x,y)\}\ =\ \{(\p,x,y)\ :\ \p\in\Ass(L^k)\}.
		\end{equation}
		
		Since $\astab(L)=\astab(J)=a$, this formula implies that $\astab(I)=a$, too.
		
		Now, since $\Ass(H^k)=\{(x,y)\}$ for all $k\ge1$, by using \cite[Corollary 4.2]{FM} and the formulas (\ref{eq:vL}) and (\ref{eq:assI}), we obtain that
		\begin{equation}\label{eq:vI-LH}
			\begin{aligned}
				\v(I^k)\ &=\ \min_{\substack{0\le\ell<k\\ \p\in\Ass(L^{k-\ell})}}\{\v_\p(L^{k-\ell})+\v_{(x,y)}(H^{\ell+1})\}\\
				&=\ \min_{0\le\ell<k}\{(2b+1)(k-\ell)-1+\v_{(x,y)}(H^{\ell+1})\}.
			\end{aligned}
		\end{equation}
		
		Using Theorem \ref{Thm:dim2}(a)-(b) we get
		$$
		(2b+1)(k-\ell)-1+\v_{(x,y)}(H^{\ell+1})\ =\ \begin{cases}
			(2b+1)k+2b&\textup{for}\ 1\le\ell\le b-2,\\
			(2b+1)k+2b-1&\textup{for}\ \ell\ge b-1.
		\end{cases}
		$$
		
		Combined with (\ref{eq:vI-LH}) we obtain that $\v(I^k)=(2b+1)k+2b$ for $1\le k<b$, and 
		$\v(I^k)=(2b+1)k+2b-1$ for $k\ge b$. Hence $\vstab(I)=b$, as desired.
	\end{proof}\smallskip
	
	Let $I\subset S$ be a graded ideal. It is known that $\depth\,S/I^{k+1}=\depth\,S/I^k$ for all $k\gg0$. The least integer $k_0$ such that $\depth\,S/I^{k+1}=\depth\,S/I^k$ for all $k\ge k_0$ is called the index of depth stability of $I$ and is denoted by $\dstab(I)$. Likewise, it is also well-known that there exists $k_*>0$ for which the Castelnuovo-Mumford regularity $\reg(I^k)$ is equal to a linear function for all $k\ge k_*$. The index of regularity stability $\rstab(I)$ of $I$ is defined as the least integer with this property.\smallskip
	
	At the moment (see \cite[Question 5.6]{FSPackA}) we do not known whether for given integers $a,d,r,v\ge1$, there exists a graded ideal $I\subset S=K[x_1,\dots,x_N]$ such that
	$$
	(\astab(I),\dstab(I),\rstab(I),\vstab(I))\ =\ (a,d,r,v).
	$$\smallskip
	
	\textbf{Acknowledgment.} A. Ficarra was partly supported by the Grant JDC2023-051705-I funded by
	MICIU/AEI/10.13039/501100011033 and by the FSE+.
	

\begin{thebibliography}{99}
		
		\bibitem{BMS24} P. Biswas, M. Mandal, K. Saha, \textit{Asymptotic behaviour and stability index of v-numbers of graded ideals}, 2024, preprint \url{https://arxiv.org/abs/2402.16583}.
		
		\bibitem{B79} M. Brodmann, \textit{Asymptotic stability of $\textup{Ass}(M/I^nM)$}, Proc. Am. Math. Soc., 74(1979), 16--18.
		
		\bibitem{Conca23} A. Conca, \textit{A note on the v-invariant}, Proc.\ Am.\ Math.\ Soc.\  \textbf{152} (2024), no.\ 6, 2349--2351.
		
		\bibitem{CSTVV20} S.M. Cooper, A. Seceleanu, S.O. Toh\u{a}neanu, M. Vaz Pinto, R.H. Villarreal. \textit{Generalized minimum distance functions and algebraic invariants of geramita ideals}. Advances in Applied Mathematics, 112:101940, 2020.
		
		\bibitem{F2023} A. Ficarra, \textit{Simon's conjecture and the $\v$-number of monomial ideals}, Collectanea Mathematica (2024): 1--16. https://doi.org/10.1007/s13348-024-00441-z
		
		\bibitem{FM} A. Ficarra, P. Macias Marques, \textit{The $\v$-function of powers of sums of ideals}, 2024, preprint \url{https://arxiv.org/abs/2405.16882}
		
		\bibitem{FS2} A. Ficarra, E. Sgroi, \textit{Asymptotic behaviour of the $\v$-number of homogeneous ideals}, 2023, preprint \url{https://arxiv.org/abs/2306.14243}
		
		\bibitem{FSPackA} A. Ficarra, E. Sgroi, \textit{Asymptotic behaviour of integer programming and the $\v$-function of a graded filtration}, J. Algebra Its Appl., 2025, \url{https://doi.org/10.1142/S0219498826502361}, available at \url{https://arxiv.org/abs/2403.08435}
		
		\bibitem{HM} J. Herzog, A. Mafi, \textit{Stability properties of powers of ideals in regular local rings of small dimension}, Pacific J. Math., {\bf295}(2018), 31--41.
		
		\bibitem{MMV} J. Mart\'inez-Bernal, S. Morey, R. Villarreal, \textit{Associated primes of powers of edge ideals}, Collect. Math. {\bf63}, 361--374 (2012).
		
		\bibitem{NT} H.D. Nguyen, Q.H. Tran, \textit{Powers of sums and their associated primes}, Pacific J. Math, Vol. 316, No. 1, 2022.
	\end{thebibliography}
\end{document}